\newtheorem{theorem}{Theorem}
\newtheorem{lemma}{Lemma}
\newtheorem{proposition}{Proposition}
\newtheorem{corollary}{Corollary}
\newtheorem{definition}{Definition}
\tikzset{>=latex}
\newcommand{\BC}{\mathcal{B}}
\newcommand{\VC}{\mathcal{V}}
\newcommand{\X}{\mathbb{X}}
\newcommand{\R}{\mathbb{R}}
\newcommand{\SB}{\mathbb{S}}
\journal{arXiv}
\begin{document}
\begin{frontmatter}

\title{A market-based approach for enabling inter-area reserve exchange}

\author[a1]{Orcun Karaca\corref{cor1}}
\ead{okaraca@ethz.ch}
\author[a3]{Stefanos Delikaraoglou}
%\ead{sdelikar@mit.edu} %only one email is needed.
\author[a2]{Maryam Kamgarpour}
%\ead{maryamk@ece.ubc.ca}
\cortext[cor1]{Corresponding author}
\address[a1]{Automatic Control Laboratory, D-ITET, ETH Z{\"u}rich, ETL K12, Physikstrasse 3, 8092, Z{\"u}rich, Switzerland}
\address[a3]{Laboratory for Information and Decision Systems, Massachusetts Institute of Technology, Cambridge, MA, USA}
\address[a2]{Electrical and Computer Engineering Department, University of British Columbia, Vancouver, Canada}
\begin{abstract} 
Considering the sequential clearing of energy and reserves in Europe, 
enabling inter-area reserve exchange requires optimally allocating inter-area transmission capacities between these two markets. To achieve this, we provide a market-based allocation framework and derive payments with desirable properties. The proposed min-max least core selecting payments achieve individual rationality, budget balance, and approximate incentive compatibility and coalitional stability. The results extend the works on private discrete items to a network of continuous public choices.
\end{abstract}
\begin{keyword}
 electricity markets, coalitional game theory, mechanism design, public choice problem
\end{keyword}

\end{frontmatter}

\section{Introduction}  

The increasing penetration of stochastic renewable generation poses new challenges to the electricity markets that were conceived on the premise of predictable and fully controllable generation.
In the current European market design, energy and reserve capacity are traded through independent and sequential auctions, which are commonly executed at noon the day before actual operation, based on point forecasts of renewable energy production. Any imbalance between scheduled generation and load demand arising close to the hour of delivery is managed through the balancing market.
% The European electricity market under study is structured such that 12-36 hours before delivery a reserve market is cleared to set aside real-time balancing resources, and then a day-ahead market is cleared for energy trading based on single-valued stochastic generation forecasts. Close to real-time, a balancing market compensates any deviations from the forecasts. 
Even though many recent works demonstrated the benefits of stochastic market design \citep{pritchard2010single,cory2018payment,gerard2018risk,ordoudis2018energy}, the actual implementation of such approaches would require significant restructuring of any of the existing frameworks. Owing to this reason, we restrict ourselves to the status-quo sequential market architecture.
\looseness=-1

Apart from the limited temporal coordination between scheduling and balancing decisions, the European market suffers also from partial coordination in space. Although day-ahead energy markets are jointly cleared, reserve and balancing markets are still operated on a regional (country) level. 
% Another source of inefficiency for the existing structure is that these markets are fully coordinated only at the day-ahead stage, while reserve and balancing markets are operated on a regional level. 
To mitigate this inefficiency, the European Commission regulation has already published a \textit{reserve exchange} guideline to be completed by 2023~\citep{EC1}. 
However, the joint-clearing of reserve markets requires also allocating a portion of the inter-area transmission capacity from the day-ahead energy market to the reserve market. Currently, these cross-border capacities for the day-ahead market are decided by the operators respecting the requirements in~\citep[Article 16(8a-8b)]{eur19}, see also the example in~\citep[\S 2]{elia20}. A critical issue that remains debatable is the exact methodology for withdrawing a portion of these capacities for reserves and its remuneration~\citep[(14)]{eur19}.

Recent work of \citet{delikaraoglou2018optimal} developed a centralized preemptive transmission allocation model that defines the optimal inter-area transmission capacity allocation for reserves, {using} a stochastic bilevel programming problem that anticipates {the reaction of all} subsequent markets.
This model assumed implicitly full coordination among the regional operators. However, it did not suggest an area-specific benefit allocation which guarantees that these operators and also their areas (which includes producers, load serving entities, and the consumer base) have sufficient benefits to accept the proposed solution. %Here, the cost of an area refers to minus the sum of the consumers' and generators' surplus pertaining to that area and the congestion rents collected by the corresponding area operator~\citep{kristiansen2018mechanism}.
This would be concerning since an application for reserve exchange can be filed by even two operators~\citep[(14)]{eur19}. {In coalitional game theory, such arrangements would be called coalitional deviations.} For the simpler setting of imbalance netting (IN), market stakeholders have already recognized that the financial benefits should be shared among the participating operators in a way that every operator and also its area  benefit from the cooperation and have the incentive to continue their participation~\citep{avramiotis2018investigations}. In the same vein, the stakeholder document in \cite[\S 6]{IGCC}, developed by ten European operators, describes a fair benefit allocation for IN. Thus, this will be an actual issue, while we go beyond IN, and implement exchange of balancing services. Motivated by this, our previous work \citep{karaca2019benefits} suggested a benefit allocation for the preemptive model which guarantees that the areas have sufficient benefits to not form subcoalitions. 

A centralized approach involving a preemptive model assumes the availability of private regional information (such as generator bids, demand profiles, and forecast scenarios) and an agreement on the estimates of a considerable number of parameters. The level of transparency still remains as a major concern even in the inter-area transmission capacity calculations for day-ahead markets~\citep{eurel20}. Considering this issue, this paper proposes a market mechanism that requires operators to submit only valuations over transmission capacity allocations, prior to day-ahead and reserve market clearing. This was in fact mentioned as the market-based allocation process with an explicit auction by the regulation in~\citep{EC1}.  We kindly refer to \citep{delikaraoglou2018optimal,EC1} for other schemes being discussed.
\looseness=-1

When designing market mechanisms, four desirable properties are individual rationality, budget balance, incentive compatibility, and coalitional stability~\citep{krishna2009auction}.
A mechanism is individually rational if and only if participants are better off bidding than not. A mechanism is budget balanced if and only if the market organizer does not collect or inject any funds. A mechanism is (dominant strategy) incentive compatible if and only if participants prefer bidding truthfully regardless of the bids of the others.
Finally, a mechanism is coalitionally stable if and only if there is no subset of participants that would rather form their own market.

There are well-known impossibility results on achieving budget balance and incentive compatibility simultaneously for both public and private good problems~\citep{green1979incentives,myerson1983efficient}. %Under the assumption of convex preferences, it is possible to achieve both properties for public goods when the optimality of the solution is sacrificed~\citep{serizawa1999strategy}.
As a result, the goal is generally to ensure a reasonable trade-off between these properties. 
On the other hand, the existing works for coalitional stability are restricted to auctions of private goods, and
they do not study budget balance. The authors in \citep{day2008core} characterized these mechanisms by selecting payments from the core (i.e., the set of coalitionally stable outcomes) in auctions of private discrete items, whereas \citep{day2007fair} showed that these mechanisms can approximate incentive compatibility. In contrast to these works, transmission capacity allocations constitute a network of continuous public choices, which generalizes the economic concepts of public goods and public bads. Two important remarks have to be made regarding the problem at hand. First, the shares of transmission capacity for reserves are dictated by the agreement of the operators at the both ends of the tie-line, and they change the costs of the operators and their areas. Second, valuations of the operators are not restricted to being continuous, monotone, or positive in their quantity. All these features mean the existing results on public/private goods are not readily applicable. 
\looseness=-1 

 Our contributions are as follows. We put forward a market framework for withdrawing inter-area transmission capacities from day-ahead energy for reserves. The well-known Groves mechanism is shown to be incentive compatible, but inconsistent with budget balance and coalitional stability. Since coalitional stability is shown to be not attainable in general, we characterize the class of budget balanced mechanisms that approximate coalitional stability, and show that they limit group manipulations and they are individually rational. We show that, among this class of mechanisms, the min-max least core selecting mechanism approximates incentive compatibility. These results are not formalized in any previous work and they extend studies on mechanisms that select payments from the core in auctions of private discrete items. Since the core can be empty for a network of continuous public choices, we derive extensions of these works by relaxing the core to the least core~\citep{maschler1979geometric}. 
 \looseness=-1
 %\textbf{(Contributions list)}
% \looseness=-1

\section{Market-based approach}
%\subsection{Alternative schemes for cross-border trading}

\subsection{Market framework}

   Let $A$ denote the set of areas. Similar to~\citep{kristiansen2018mechanism,avramiotis2018investigations} and many others, in our framework, area as a whole (country or region) is an ensemble of consumers and generators pertaining to that area and operator (and the transmission owners). This definition ensures the institutional relevance of the overall problem. Even when the main actors involved in the decision-making of a reserve exchange are operators, they are expected to seek the benefits of their areas, see \citep{IGCC,avramiotis2018investigations}, and they are often checked by regulatory authorities.
   \looseness=-1
   
   Let $E$ denote the set of links. Assume the graph $(A,E)$ is strongly connected {and} simple, {i.e., undirected graph without self-loops}. Areas $a_1,a_2\in A$ are adjacent (or neighbors) if $e=(a_1,a_2)=(a_2,a_1)\in E$. A link $e\in E$ is incident to an area $a$ if $a\in e$. Given a set of areas $S\subseteq A$, $E_S=\{e\in E\,|\, \exists!a\in S\ \text{such that}\ a\in e\}$ denotes the set of links connecting the area set $S$ to the remaining areas in $A\setminus S$. Throughout the paper, we use $a$ and $\{a\}$ interchangeably, for instance, $E_a=E_{\{a\}}.$ Moreover, given a set of areas $R\subseteq A$, $E^R=\{e\in E\,|\, \exists a_1,a_2\in R\ \text{such that}\ e=(a_1,a_2)\}$ denotes the set of links connecting the areas within the set~$R.$ %Note that using both definitions we can also define $E^R_S=\{e\in E^R\,|\, \exists!a\in S\ \text{such that}\ a\in e\}$ to denote the links connecting the set $S$ to the areas in $R\setminus S.$ 
   Let $\chi\in[0,\,1]^E$ denote the percentage of inter-area interconnection capacity withdrawn from the day-ahead market and allocated to reserves exchange. For any $F\subseteq E$, let $\chi_{F}=\{\chi_e\}_{e\in F}$. We assume its default value is $\chi'\in[0,\,1]^E$ which originates from the existing cross-border agreements. Following our previous discussions, this default value will generally be given by $\chi'= 0,$ which means that reserve exchanges are not admissible in the existing sequential electricity market.  One notable exemption is the Skagerrak interconnector between Western Denmark and Norway with $\chi'=0.15.$
    
    Let $\X_a\subseteq[0,\,1]^{E_a}$ denote the feasible allocation choices for area~$a.$ Each area has a private true valuation $v_a:\X_a\rightarrow\R$, which maps from transmission capacity allocations for the links incident to area~$a$ to the change in the cost of area~$a$ relative to the cost under the default values $\chi'$. Here, the cost of an area refers to minus the social welfare, which is given by the sum of the consumers' and generators' surplus pertaining to that area and the congestion rents collected by the corresponding area operator. This definition was established also in \citep{kristiansen2018mechanism} for allocating benefits from new interconnections.
    Positive $v_a$ is a reduction in costs. We further have $\chi'_{E_a}\in\X_a$ and $v_a(\chi'_{E_a})=0$. We assume these values can be estimated by the area operators and is thus reflected in their bids. In the numerics, we will address how they can be computed.
    
    Each area then submits a (potentially nontruthful) bid of the form $b_a:\hat{\X}_a\rightarrow\R$, where $\chi'_{E_a}\in\hat{\X}_a\subseteq[0,\,1]^{E_a}$ and $b_a(\chi'_{E_a})=0.$ Observe that the variable $\chi_{(a,a')} $ is a public choice shared by $a$ and $a'$, since $\chi_{(a,a')}$ is an argument to the bids of both areas $a$ and {$a'$}. These functions can {theoretically} be extended to the links incident to the neighbors. %However, in case of a large area graph, {one needs to consider the} trade-off between the computational complexity of the market problem and its optimality. 
    However, an area is assumed to be precluded from taking part in the decision for other links. 
    \looseness=-1
    
    Given the strategy profile $\BC=\{b_a\}_{a\in A},$ a \textit{mechanism} defines an \textit{allocation rule} $\chi^*(\BC)\in[0,\,1]^{E}$ and a \textit{payment rule} $p_a(\BC)\in\R$ for all $a\in A$. We restrict our attention to the allocation rule that achieves the allocative efficiency:
    \begin{equation}\label{eq:market_prob}
        V(\mathcal{B})= \max_{\chi\in\SB} \sum_{a\in A}b_a(\chi_{E_a})\ \ \mathrm{s.t.}\ \ \chi_{E_a}\in\X_a,\ \forall a\in A,
\end{equation}
where the set $\SB\subseteq[0,\,1]^{E}$ (with $\chi'\in\SB$) {may encompass any exogenously imposed regulatory constraints}, e.g., a fixed percentage of reserves should be covered by internal resources, or a restriction on the feasible $\chi$ for computational tractability. Let the optimal solution of \eqref{eq:market_prob} be denoted by $\chi^*(\BC)$. Assume that in case of multiple optima there is a tie-breaking rule.  
\looseness=-1

Utility of area $a$ is assumed to be quasilinear (linear and separable in the payment), and it is defined by $u_a(\mathcal{B})=v_a(\chi^*_{E_a}(\mathcal{B}))-p_a(\mathcal{B}).$ Revealed utilities are defined as the utilities computed from the information disclosed to the market organizer: $\bar{u}_a(\mathcal{B})=b_a(\chi^*_{E_a}(\mathcal{B}))-p_a(\mathcal{B}).$ For the market organizer, both its revealed and true utilities are equivalent, and defined by the total payment collected: $u_{\text{MO}}(\mathcal{B})=\bar{u}_{\text{MO}}(\mathcal{B})=\sum_{a\in A}p_a(\mathcal{B}).$
These definitions are imperative since the true utilities of the areas are
unknown. Notice that the revealed utilities correspond to the true utilities whenever the submitted bids are the true valuations. Hence, any property defined over the revealed utilities would also hold for true utilities whenever the areas are truthful. Moreover, any payment rule is uniquely defined by the revealed utilities it induces:  $p_a(\mathcal{B})=b_a(\chi^*_{E_a}(\mathcal{B}))-\bar{u}_a(\mathcal{B}).$

% Next, we bring in several fundamental properties we desire for the mechanism.
Before providing the desired fundamental properties our mechanisms, we highlight that the framework studied in this paper defines monetary quantities on an area level, and does not distribute them on a market participant level. Defining such distribution rules is part of our ongoing work, see also the discussions in \cite{karaca2019benefits,karaca2020theory}.
\begin{definition}[Individual rationality]
    A mechanism is individually rational if the areas are facing nonnegative revealed utilities: $\bar{u}_a(\mathcal{B})=b_a(\chi^*_{E_a}(\mathcal{B}))-p_a(\mathcal{B})\geq0.$
    \end{definition}
\begin{definition}[Budget balance]
    A mechanism is budget balanced if the market operator faces a nonnegative (revealed) utility $\bar{u}_{\text{MO}}(\mathcal{B})\geq 0.$ Even more preferably, a mechanism is strongly budget balanced if $\bar{u}_{\text{MO}}(\mathcal{B})=0.$
\end{definition}
\begin{definition}[Efficiency]
    A mechanism is efficient if the sum of all utilities ${u}_{\text{MO}}(\mathcal{B})+\sum_{a\in A}u_a(\mathcal{B})=\sum_{a\in A}v_a(\chi^*_{E_a}(\mathcal{B}))$ is maximized. Or equivalently, efficiency is attained if we are solving for the optimal allocation of the market in \eqref{eq:market_prob} under the condition that all of the areas submitted their true valuations $\VC=\{v_a\}_{a\in A}.$
\end{definition}

Efficiency can also be defined for the revealed utilities. However, this property would not be meaningful since it is guaranteed independent of the payment rule as long as we are solving for the optimal allocation of~\eqref{eq:market_prob} under the submitted bids. Connected with the original efficiency definition and its relation to truthfulness, we bring in incentive compatibility.

\begin{definition}[Incentive compatibility]
    A mechanism is dominant-strategy incentive-compatible (DSIC) if the true valuation profile $\VC=\{v_a\}_{a\in A}$ is the dominant strategy Nash equilibrium.
\end{definition}

Unilateral deviations are not the only
manipulations we need to consider in order to ensure that all
areas reveal their true valuations to the market. A subset of areas $S\subset A$ can potentially exercise a coalitional deviation, that is, they can {exclude} areas $A\setminus S$ to form their own market~\citep[(14)]{eur19} and compute the optimal transmission allocation for only their bids,
\begin{equation}\label{eq:market_prob_coal}
    \begin{split}
        V(\mathcal{B}_S)= \max_{\chi\in\SB} \sum_{a\in S}b_a(\chi_{E_a})\ \ \mathrm{s.t.}\ \ \chi_{E_a}\in\X_a,\ \forall a\in S,\\ \chi_e=\chi_e',\ \forall e\in E\setminus E^S,
    \end{split}
\end{equation}
where $\mathcal{B}_S=\{b_j\}_{j\in S}.$
The last set of constraints encodes the fact that altering the transmission capacity allocation of a link requires the approval from both areas incident to it. This observation implies that a deviating coalition cannot change the default value $\chi'$ for links missing these two approvals. Observe that $V(\mathcal{B}_{a})=0$ for all $a\in A$, since a single area cannot change any of the default values. It can easily be verified that the set function $V$ is nondecreasing in~$S$. %Moreover, if the constraint set $\SB$ is separable as constraints on each link, that is, $\SB=\prod_{e\in E}\mathbb{Q}_e$ and $\mathbb{Q}_e\subseteq[0,\,1]$, then the set function $V$ is also superadditive in $S$, that is, $V(\mathcal{B}_S\cup \mathcal{B}_R)\geq V(\mathcal{B}_S)+V(\mathcal{B}_R)$, $\forall R,S:R\cap S=\emptyset.$ 
Related to coalitional deviations, we bring in $\epsilon$-coalitional stability. %super additivity..

\begin{definition}[$\epsilon$-Coalitional stability]
    Given $\epsilon\ge0$, a mechanism is $\epsilon$-coalitionally stable if the revealed utilities of the areas lie in the $\epsilon$-core $K_\text{Core}(\BC,\epsilon)$, that is,
$\{\bar{u}_a(\mathcal{B})\}_{a\in A}\in K_\text{Core}(\BC,\epsilon)=\{\bar u \in \R^A\,|\,\sum_{a\in A}\bar{u}_a=V(\BC),\ \sum_{a\in S}\bar{u}_a\geq V(\BC_S)-\epsilon,\ \forall S\subset A\}.$
\end{definition}

 %The inequalities also imply individual rationality since $\bar{u}_a\geq V(\BC_a)=0$ for all $a\in A$. 

%

As a remark, the core is defined as $K_\text{Core}(\BC,0)$. Existing results concerning the (non)emptiness of the core are relegated to \ref{app:A}. In our problem instances the core is usually empty. Hence, we utilized the {notion of} $\epsilon$-core from \citep{shapley1966quasi} in the above definition. Invoking the definition of the revealed utilities, the equality constraint in $K_\text{Core}(\BC,\epsilon)$ is equivalent to strong budget balance, since otherwise areas would prefer to arrange this market with another organizer. When $\epsilon=0$, the inequalities are our exact coalitional requirement: no set of areas can improve their total revealed utilities by a coalitional deviation. This property is defined over the revealed utilities, since the true utilities are private information. Whenever $\epsilon>0$, these inequalities can be interpreted as follows. If organizing a coalitional deviation entails an additional utility reduction of $\epsilon\in\mathbb{R}$, the total revealed utilities would be given by $V(\BC_S)-\epsilon$. Then, the resulting core would be the $\epsilon$-core. This provides us with an $\epsilon$ approximation of coalitional stability.

In order to motivate our proposal in the next section, we briefly review the well-known Groves payment defined by
$p_a(\BC)=b_a(\chi^*_{E_a}(\mathcal{B}))-(V(\mathcal{B})-h_a(\mathcal{B}_{-a}))$, where $\mathcal{B}_{-a}=\{b_j\}_{j\in A\setminus a}$. A particular choice for the function $h_a(\mathcal{B}_{-a})\in\R$ is the \textit{Clarke pivot rule} $h_a(\mathcal{B}_{-a})=V(\mathcal{B}_{-a})$
where $V(\mathcal{B}_{-a})$ is the optimal value of \eqref{eq:market_prob_coal} with $S=A\setminus a$. The Groves payment with the Clarke pivot rule is referred to as the Vickrey-Clarke-Groves (VCG) mechanism.
%In case of an auction for a single private item, it is equivalent to the second-price Vickrey auction. 
Our first result shows that the properties of the VCG mechanism extend to our problem. This result is a straightforward generalization of the original proof~\citep{krishna2009auction}, which does not consider a network of continuous (divisible) public choices with general nonconvex constraints and nonconvex valuations, and included for the sake of completeness.

\begin{proposition}\label{prop:VCG}
	\label{thm:incentive_comp}
	Given the model \eqref{eq:market_prob}, (i) the Groves payment yields a DSIC mechanism, (ii) the VCG mechanism (the Groves payment with the Clarke pivot rule) ensures \text{individual rationality}.
\end{proposition}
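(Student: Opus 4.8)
The plan is to follow the classical Groves/VCG argument \citep{krishna2009auction}, taking care that here the decision variable is the network of continuous public choices $\chi$ constrained to $\SB$ and to the sets $\{\X_{a}\}_{a\in A}$, rather than an allocation of private discrete items. For part (i), I would substitute the Groves payment into the \emph{true} utility of area $a$. Writing $p_a(\BC)=b_a(\chi^*_{E_a}(\BC))-V(\BC)+h_a(\BC_{-a})$ and using $V(\BC)=\sum_{a'\in A}b_{a'}(\chi^*_{E_{a'}}(\BC))$, the two $b_a$ terms cancel and one is left with
\[
u_a(\BC)=v_a(\chi^*_{E_a}(\BC))+\sum_{a'\in A\setminus a}b_{a'}(\chi^*_{E_{a'}}(\BC))-h_a(\BC_{-a}).
\]
The decisive observation is that $h_a(\BC_{-a})$ is independent of $a$'s own report, so maximizing $u_a$ over $a$'s bid amounts to inducing an allocation that maximizes $v_a(\chi_{E_a})+\sum_{a'\neq a}b_{a'}(\chi_{E_{a'}})$. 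I would then compare against the truthful report $b_a=v_a$: by definition the allocation rule \eqref{eq:market_prob} under truthful reporting selects a $\chi$ that maximizes exactly this objective over the same feasible set, so no alternative report can deliver a strictly larger value. Hence truthful bidding weakly dominates every deviation, for every $\BC_{-a}$, which is DSIC.

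The single point requiring care in this step, and the one genuinely new relative to textbook VCG, is feasibility of the comparison: to treat the allocation induced by a deviating bid as a candidate for the truthful optimization, the induced $\chi^*_{E_a}(\BC)$ must lie in the domain of $v_a$. I would dispatch this by noting that the regulatory set $\SB$ and the feasibility sets $\{\X_{a'}\}_{a'\in A}$ are identical across reports, so under the standing premise that $v_a$ is evaluable at the relevant points (i.e. the deviated allocation lies in $\X_a$), the deviated $\chi$ is feasible for the truthful problem and the inequality goes through. This argument is purely a comparison of optimization values over a common feasible set, so the nonconvexity of $\SB$, $\X_a$ and of the valuations $v_a$, $b_a$ never enters — which is precisely why the textbook proof extends without modification.

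For part (ii), I would evaluate the \emph{revealed} utility under the Groves payment, which collapses to $\bar{u}_a(\BC)=V(\BC)-h_a(\BC_{-a})$, and then insert the Clarke pivot $h_a(\BC_{-a})=V(\BC_{A\setminus a})$, giving $\bar{u}_a(\BC)=V(\BC)-V(\BC_{A\setminus a})$. Since $E^A=E$, the coalitional problem \eqref{eq:market_prob_coal} with $S=A$ carries no default-value constraints and so coincides with \eqref{eq:market_prob}, yielding $V(\BC)=V(\BC_A)$; it then remains to establish $V(\BC_A)\geq V(\BC_{A\setminus a})$. This is the monotonicity of $V$ in $S$ flagged in the text, which I would verify constructively: take an optimal $\chi$ for the coalition $A\setminus a$ and check it is feasible for $A$. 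Every link incident to the excluded area $a$ lies in $E\setminus E^{A\setminus a}$ and hence carries its default value, so $\chi_{E_a}=\chi'_{E_a}\in\X_a$ and the added objective term is $b_a(\chi'_{E_a})=0$; the value is therefore unchanged while feasibility is preserved, so $V(\BC_A)\geq V(\BC_{A\setminus a})$. This gives $\bar{u}_a(\BC)\geq0$, i.e. individual rationality. I do not expect either part to present a serious obstacle — consistent with the remark that this generalizes the original proof — with the feasibility/domain matching in part (i) being the only place deserving an explicit word.
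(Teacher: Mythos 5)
Your proposal is correct and follows essentially the same route as the paper's proof: part (i) rewrites the Groves utility as the objective of the problem defining $V(\VC_a\cup\BC_{-a})$ evaluated at the (feasible) allocation induced by the deviation, and part (ii) reduces individual rationality to $\bar{u}_a(\BC)=V(\BC)-V(\BC_{-a})\geq 0$ via monotonicity of $V$. Your two added touches---flagging that $\chi^*_{E_a}(\BC)$ must lie in the domain of $v_a$, and constructively verifying monotonicity by extending an optimal coalition allocation with $\chi_{E_a}=\chi'_{E_a}$ and $b_a(\chi'_{E_a})=0$---are details the paper asserts without proof, so they are welcome but not a different approach.
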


The proof is relegated to \ref{app:B}. In summary, all bidders have incentives to reveal their true valuations under the Groves payment if they consider only unilateral deviations. Moreover, this mechanism is known to be the unique DSIC mechanism for a general class of problems~\citep{green1977characterization}. 
However, two negative results can be stated from the literature as follows. There is no mechanism for public good problems that can always solve for the optimal allocation under the submitted bids, and attain DSIC and strong budget balance simultaneously~\citep{green1979incentives}. 
The work in \citep{myerson1983efficient} ensures the above for the exchange of private goods in the more general setting of Bayesian implementation. 
%In the more general setting of Bayesian implementation, there is no mechanism for the exchange of private goods that can always solve for the optimal allocation under the submitted bids, and attain DSIC, strong budget balance, and individual rationality simultaneously~\citep{myerson1983efficient}. 
\ref{app:B2} presents a counterexample for our problem involving a network of public choices showing that Groves payment is not strongly budget balanced, $\epsilon$-coalitional stability is not attained, and it cannot be efficient.

Next section proposes a payment rule that attains strong budget balance, individual rationality, and approximates coalitional stability and DSIC.

\subsection{Our proposal for the market-based approach}

Let $\epsilon^*(\BC)$ be the critical value such that the $\epsilon$-core is nonempty, $\epsilon^*(\BC)=\min\{\epsilon\geq 0\,|\,K_\text{Core}(\BC,\epsilon)\not=\emptyset\}$. The set $K_\text{Core}(\BC,\epsilon^*(\BC))$ is called the \textit{least core}~\citep{maschler1979geometric}. In contrast to \citep{maschler1979geometric}, we additionally include the constraint $\epsilon\geq 0$, since $\epsilon^*(\BC)<0$ certifies that the core is nonempty, and our goal---the exact coalitional stability implied by $K_\text{Core}(\BC,0)$---is achievable.

The least core attains minimal violation of our coalitional requirement whenever the core is empty. This establishes the foundation for the following definition.

\begin{definition}[Approximate coalitional stability]
    A mechanism is approximately coalitionally stable if the revealed utilities lie in the least core $K_{\text{Core}}(\BC,\epsilon^*(\BC))$.
\end{definition}

 Observe that strong budget balance is implied by the least core $K_{\text{Core}}(\BC,\epsilon^*(\BC))$, $\sum_{a\in A}\bar{u}_a(\mathcal{B})=V(\BC)$, but individual rationality is not, since we have $\bar{u}_a(\mathcal{B})\geq V(\BC_a)-\epsilon^*(\BC)=-\epsilon^*(\BC)$.
 
 We now define the least core selecting payment rule as $p_a^{\text{LC}}(\mathcal{B})=b_a(\chi^*_{E_a}(\mathcal{B}))-\bar{u}_a(\mathcal{B}),\ \text{where}\  \bar{u}(\mathcal{B})=\{\bar{u}_a(\mathcal{B})\}_{a\in A}\in K_{\text{Core}}(\BC,\epsilon^*(\BC)).$
This mechanism is strongly budget balanced and approximately coalitionally stable. We can prove two additional properties. First, we can obtain a bound on the profitability of picking nontruthful bids as a group of areas. This result complements approximate coalitional stability, since a coalition can resort to such group manipulations while still being part of the market with all the areas. Second, we prove individual rationality.

\begin{theorem}\label{thm:main1}
Given the model \eqref{eq:market_prob}, 
    \begin{enumerate}
    \item[(i)] Let $\bar{\epsilon}$ be an upper bound on $\epsilon^*(\BC)$ for all admissible profiles $\BC$. Assume a coalition of areas $S\subset A$ is strategizing as a group to pick their bid functions~$\mathcal{B}_{S}$. Under the least core selecting payment rule, they can obtain at most $\bar{\epsilon}$ more total utility when compared to the case in which they participate as a single area in a VCG mechanism.
		\item[(ii)] The least core selecting payment rule yields an individually rational mechanism.
    \end{enumerate}
\end{theorem}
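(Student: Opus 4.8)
The plan is to handle the two parts separately, with part (i) reducing to a direct estimate and part (ii) containing the genuine difficulty.

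For part (i) I would start from the coalition's aggregate \emph{true} utility under the least core payment. Writing $p_a^{\text{LC}}(\BC)=b_a(\chi^*_{E_a}(\BC))-\bar u_a(\BC)$ and summing over $a\in S$ gives $\sum_{a\in S}u_a(\BC)=\sum_{a\in S}v_a(\chi^*_{E_a}(\BC))-\sum_{a\in S}b_a(\chi^*_{E_a}(\BC))+\sum_{a\in S}\bar u_a(\BC)$. The key estimate is an upper bound on $\sum_{a\in S}\bar u_a(\BC)$: combining strong budget balance $\sum_{a\in A}\bar u_a(\BC)=V(\BC)$ with the least-core inequality for the complementary coalition, namely $\sum_{a\in A\setminus S}\bar u_a(\BC)\ge V(\BC_{A\setminus S})-\epsilon^*(\BC)$, yields $\sum_{a\in S}\bar u_a(\BC)\le V(\BC)-V(\BC_{A\setminus S})+\epsilon^*(\BC)$. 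Substituting this and using $V(\BC)=\sum_{a\in A}b_a(\chi^*_{E_a}(\BC))$ to cancel the $S$-bids against the grand total, the bound collapses to $\sum_{a\in S}v_a(\chi^*_{E_a}(\BC))+\sum_{a\in A\setminus S}b_a(\chi^*_{E_a}(\BC))-V(\BC_{A\setminus S})+\epsilon^*(\BC)$.

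Next I would recognize the first two terms as the objective of \eqref{eq:market_prob} for the profile $\BC^{\dagger}=(\VC_S,\BC_{-S})$, with $\VC_S=\{v_a\}_{a\in S}$ and $\BC_{-S}=\{b_j\}_{j\in A\setminus S}$, in which the coalition bids truthfully, evaluated at the feasible allocation $\chi^*(\BC)$; since $\chi^*(\BC)$ is feasible but generally suboptimal for $\BC^{\dagger}$, these terms are at most $V(\BC^{\dagger})$, giving $\sum_{a\in S}u_a(\BC)\le V(\BC^{\dagger})-V(\BC_{A\setminus S})+\epsilon^*(\BC)$. Finally I would identify $V(\BC^{\dagger})-V(\BC_{A\setminus S})$ as exactly the VCG (Clarke-pivot) utility that the merged single area $S$ obtains when it bids its aggregate valuation $\sum_{a\in S}v_a$ truthfully (optimal by Proposition~\ref{prop:VCG}(i)) against the same bids $\BC_{-S}$ of the remaining areas, because merging the members of $S$ leaves both the feasible set and the welfare objective of \eqref{eq:market_prob} unchanged, while deleting the merged area reproduces \eqref{eq:market_prob_coal} for $A\setminus S$. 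Bounding $\epsilon^*(\BC)\le\bar\epsilon$ then delivers the claim.

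For part (ii) the crucial observation is that individual rationality $\bar u_a(\BC)\ge 0$ is precisely the singleton least-core inequality at $\epsilon=0$, since $V(\BC_a)=0$. When the core is nonempty ($\epsilon^*(\BC)=0$) this is immediate, as every least-core point already satisfies the singleton constraints. When $\epsilon^*(\BC)>0$ the least-core inequalities only guarantee $\bar u_a(\BC)\ge-\epsilon^*(\BC)$, so I must instead show that the least core still contains a nonnegative point and let the payment rule select it. Rewriting the coalition inequalities in complementary form produces the per-area caps $\bar u_a(\BC)\le V(\BC)-V(\BC_{A\setminus a})+\epsilon^*(\BC)$, each nonnegative by monotonicity of $V$, so the nonnegativity orthant is at least compatible with these caps. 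The hard part, which I expect to be the main obstacle, is to prove that imposing nonnegativity does not raise the minimal violation $\epsilon^*(\BC)$, i.e.\ that the intersection of the least core with the nonnegative orthant is nonempty; I would attack this by an LP-feasibility argument (a covering-LP / balanced-collection duality tied to the dual characterization of $\epsilon^*(\BC)$), using monotonicity of $V$ to preclude an infeasibility certificate. Once the existence of a nonnegative least-core allocation is established, selecting it yields individual rationality while preserving strong budget balance and approximate coalitional stability.
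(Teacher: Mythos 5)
Your part (i) is correct and in substance follows the paper's route. Your key estimate $\sum_{a\in S}\bar u_a(\BC)\le V(\BC)-V(\BC_{-S})+\epsilon^*(\BC)$ is exactly the paper's Lemma~\ref{lem:1}, derived the same way (budget balance plus the complementary-coalition inequality). The one genuine difference is at the end: the paper explicitly constructs a merged bid $b_j$ over the boundary links $E_S$ (optimizing out the internal links $E^S$), bounds the coalition's utility by $u^{\text{VCG}}$ under $b_j$, and only then invokes DSIC of the VCG mechanism to pass to the truthful merged valuation; you instead inline that DSIC step by bounding $\sum_{a\in S}v_a(\chi^*_{E_a}(\BC))+\sum_{a\in A\setminus S}b_a(\chi^*_{E_a}(\BC))\le V(\VC_S\cup\BC_{-S})$ via feasibility of $\chi^*(\BC)$, landing directly on $V(\VC_S\cup\BC_{-S})-V(\BC_{-S})+\bar\epsilon$. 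That is a slightly leaner path to the same bound, and your closing identification (merging members of $S$ leaves \eqref{eq:market_prob} unchanged while the merged Clarke pivot is $V(\BC_{-S})$ per \eqref{eq:market_prob_coal}) is the right justification; note that both your argument and the paper's implicitly evaluate $v_a$ at $\chi^*_{E_a}(\BC)\in\hat{\X}_a$, so neither is more exposed on the domain bookkeeping, and you silently skip the paper's normalization of the merged valuation at $\chi'_{E_j}$, which is a minor accounting point.

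Part (ii) is where there is a genuine gap, in two respects. First, the decisive step is announced but never executed: you say you ``would attack'' the nonemptiness of $K_{\text{Core}}(\BC,\epsilon^*(\BC))\cap\R^A_+$ by a covering-LP/balanced-collection duality argument, but no such argument is supplied, and that claim is the entire content of the statement when $\epsilon^*(\BC)>0$; the duality route is also awkward here because at $\epsilon^*(\BC)>0$ balancedness has already failed, so any dual certificate must be run on the $\epsilon$-shifted game. Second, even if completed, your plan proves less than the theorem asserts: the least core selecting payment rule is defined by an \emph{arbitrary} selection $\bar u(\BC)\in K_{\text{Core}}(\BC,\epsilon^*(\BC))$ (and the MLC selection \eqref{eq:ba_opt_tie} imposes no nonnegativity constraint), so exhibiting one nonnegative point and ``letting the payment rule select it'' would require redefining the rule. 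What is needed, and what the paper proves, is the stronger inclusion $K_{\text{Core}}(\BC,\epsilon^*(\BC))\subseteq\R^A_+$: every least-core point is nonnegative. The paper's argument (extending \citep[Theorem~2.7]{maschler1979geometric}) is a short primal perturbation by contradiction, using exactly the two structural facts you correctly identified, monotonicity of $V$ and $V(\BC_a)=0$: if $u$ in the least core had $u_{a'}<0$, then for every $S\not\ni a'$ one combines $\sum_{a\in S}u_a+u_{a'}\ge V(\BC_{S\cup a'})-\epsilon^*(\BC)$ with $V(\BC_{S\cup a'})\ge V(\BC_S)$ and $u_{a'}<0$ to get the \emph{strict} slack $\sum_{a\in S}u_a> V(\BC_S)-\epsilon^*(\BC)$; transferring a small $\delta>0$ from every other area to $a'$ (i.e., $\tilde u_a=u_a-\delta$ for $a\ne a'$ and $\tilde u_{a'}=u_{a'}+(|A|-1)\delta$) then yields a point of $K_{\text{Core}}(\BC,\epsilon^*(\BC)-\delta)$, contradicting the minimality of $\epsilon^*(\BC)$. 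Replacing your existence sketch with this contradiction argument repairs the proof and delivers individual rationality for every least-core selection, as the theorem requires.
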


To prove the above result, we first bring in a lemma reformulating the least core with an alternative set of inequalities in the form of upper bounds.
\looseness=-1

\begin{lemma}\label{lem:1}
$\bar{u}(\mathcal{B})\in K_{\text{Core}}(\BC,\epsilon^*(\BC))$ if and only if $\sum_{a\in A}\bar{u}_a(\mathcal{B})=V(\BC)$ and $\sum_{a\in S}\bar{u}_a(\mathcal{B})\leq V(\BC)-V(\BC_{-S})+\epsilon^*(\BC)$, where $\BC_{-S}=\{b_j\}_{j\in A\setminus S}$ for all $S\subset A.$
\end{lemma}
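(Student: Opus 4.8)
The plan is to exploit the equality constraint $\sum_{a\in A}\bar u_a(\BC)=V(\BC)$, which is common to both characterizations, and to turn each lower bound defining the $\epsilon$-core into an upper bound on the complementary coalition. Concretely, I would fix $\epsilon^*=\epsilon^*(\BC)$ and write out $K_{\text{Core}}(\BC,\epsilon^*)$ as the equality together with the family of lower bounds $\sum_{a\in T}\bar u_a\geq V(\BC_T)-\epsilon^*$ indexed by $T\subset A$. The key observation is that, under the equality, $\sum_{a\in T}\bar u_a=V(\BC)-\sum_{a\in A\setminus T}\bar u_a$, so a lower bound on the coalition $T$ is equivalent to an upper bound on its complement. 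This is the entire engine of the proof; the rest is bookkeeping over which coalitions index each family.

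First I would carry out the rearrangement for a single coalition. Setting $S=A\setminus T$ and noting that in the paper's notation $\BC_{-S}=\BC_{A\setminus S}=\BC_T$, the lower bound $\sum_{a\in T}\bar u_a\geq V(\BC_T)-\epsilon^*$ becomes, after substituting $\sum_{a\in T}\bar u_a=V(\BC)-\sum_{a\in S}\bar u_a$, exactly $\sum_{a\in S}\bar u_a\leq V(\BC)-V(\BC_{-S})+\epsilon^*$. Since every step here is a reversible, term-by-term manipulation performed under the equality, each individual constraint in one list is equivalent to a single constraint in the other.

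Next I would argue that the two \emph{families} of constraints coincide by identifying the correct bijection between their index sets, namely $T\mapsto S=A\setminus T$. The only genuine care needed---and the main, if minor, obstacle---is the matching of boundary indices, since the bijection sends $\{T:T\neq A\}$ onto $\{S:S\neq\emptyset\}$, whereas the stated upper bounds are indexed by $S\subset A$. I would dispatch these mismatched endpoints by checking they are vacuous or redundant on both sides: the lower bound at $T=\emptyset$ reads $0\geq-\epsilon^*$ and is trivial; its image $S=A$ gives $\sum_{a\in A}\bar u_a\leq V(\BC)+\epsilon^*$ (using $V(\BC_\emptyset)=0$), which follows from the equality together with $\epsilon^*\geq0$; and the reformulated constraint at $S=\emptyset$ reads $0\leq\epsilon^*$, again trivial. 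Dropping these redundant boundary constraints leaves the bijection acting between the proper nonempty subsets, so the equality together with the lower-bound family holds if and only if the equality together with the upper-bound family holds, which is precisely the claimed equivalence.
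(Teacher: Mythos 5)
Your proof is correct and takes essentially the same route as the paper's: under the budget-balance equality, each lower bound $\sum_{a\in T}\bar{u}_a\geq V(\BC_T)-\epsilon^*(\BC)$ is rewritten as $V(\BC)-\sum_{a\in A\setminus T}\bar{u}_a\geq V(\BC_T)-\epsilon^*(\BC)$ and the substitution $S=A\setminus T$ yields the stated upper bounds. Your explicit handling of the boundary indices ($T=\emptyset$, $S=\emptyset$, $S=A$) is a minor tidying-up that the paper's one-line proof leaves implicit, since all of these constraints are vacuous given $\epsilon^*(\BC)\geq 0$ and the equality constraint.
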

\begin{proof}
Since $\sum_{a\in R}\bar{u}_a(\mathcal{B})=V(\mathcal{B})-\sum_{a\in A\setminus R}\bar{u}_a(\mathcal{B})$, we can equivalently reorganize the inequality constraints as $V(\mathcal{B})-\sum_{a\in A\setminus R}\bar{u}_a(\mathcal{B})\geq V(\BC_{R})-\epsilon^*(\BC)$, for all $R \subset A$.
	Setting $R=A\setminus S$ yields the statement. \hfill
\end{proof}
\begin{proof}[Proof of Theorem~\ref{thm:main1}]
\textit{(i)} For the set of areas $S$, define a merged bid for the case in which they participate as a single area $j$: $b_j:\hat{\X}_j\rightarrow\R,$ where $\hat{\X}_j=\{\chi_{E_S}\,\rvert\,\exists\chi_{E^S}\ \text{such\ that}\ \chi_{E_a}\in\hat{\X}_a,\forall a\in S\},$ and ${b}_j(\chi_{E_S})=\min_{\chi_{E^S}}\, \sum_{a\in S}b_a(\chi_{E_a})\ \ \mathrm{s.t.}\ \ \chi_{E_a}\in\hat{\X}_a,\ \forall a\in S$. Let $E_j=E_S.$ Observe that $b_j(\chi_{E_j}')\geq 0$, and whenever this value is strictly greater than 0, we can normalize it by assuming that the set $S$ collected the resulting positive revealed utility before participating in the market. Using the same definition, denote the true merged valuation by $v_j:{\X}_j\rightarrow\R.$ Define the profiles ${\tilde{\mathcal{B}}}=(\mathcal{B}_{-S},{\mathcal{B}}_{j})$, ${{\mathcal{B}}}=(\mathcal{B}_{-S},{\mathcal{B}}_{S}),$ ${\tilde{\mathcal{V}}}=(\mathcal{B}_{-S},{\mathcal{V}}_{j})$.
Total utility obtained from group bidding by the set $S$ is given by
	\begin{align*}
	\sum_{a\in S} u_a(\BC) &=\sum_{a\in S}[v_a(\chi_{E_a}^*(\BC))-b_a(\chi_{E_a}^*(\BC))+\bar{u}_a(\BC)]\\
	&\hspace{-1.4cm}\le \sum_{a\in S}v_a(\chi_{E_a}^*(\BC))-b_a(\chi_{E_a}^*(\BC))+V(\BC)-V(\BC_{-S})+\epsilon^*(\BC)\\
	&\hspace{-1.4cm}\leq\sum_{a\in S}[v_a(\chi_{E_a}^*(\BC))]-b_j(\chi_{E_j}^*(\BC))+V(\tilde{\BC})-V(\BC_{-S})+\bar{\epsilon}\\&\hspace{-1.4cm}= u_a^{\text{VCG}}(\tilde{\BC})+\bar{\epsilon}\leq u_a^{\text{VCG}}(\tilde{\VC})+\bar{\epsilon}.
	\end{align*}
The first equality is the definition of true utilities. The first inequality follows from the least core selecting payment rule and Lemma~\ref{lem:1}. The second inequality holds since $\epsilon^*(\BC)\le \bar{\epsilon}$ and the definition of the merged bid implies that $V({\tilde{\mathcal{B}}})=V({{\mathcal{B}}})$ and $b_j(\chi_{E_j}^*(\BC))=\sum_{a\in S}b_a(\chi_{E_a}^*(\BC))$. The second equality follows from the definition of the VCG utility and the uniqueness guaranteed by the tie-breaking rule: $\chi^*_{E_j}(\BC)=\chi^*_{E_j}(\tilde{\BC})$. The third inequality is the DSIC property of the VCG mechanism. Therefore, the utility obtained from group manipulation is upper bounded by the utility obtained when the group participates as a single area in a VCG mechanism plus $\bar{\epsilon}.$

\textit{(ii)} For this proof, we need to show that the least core selecting payment rule implies that $\bar{u}_a(\mathcal{B})=b_a(\chi^*_{E_a}(\mathcal{B}))-p_a(\mathcal{B})\geq 0$ under any bid profile for any area. Clearly, this is equivalent to certifying that $K_{\text{Core}}(\BC,\epsilon^*(\BC))\subseteq \R^A_+$ under any bid profile. To this end, we can extend the method in \citep[Theorem 2.7]{maschler1979geometric} by taking into account that our set function~$V$ is defined by the optimization problem in~\eqref{eq:market_prob_coal}, and hence it is both nondecreasing and $V(\mathcal{B}_{a})=0$ under any bid profile. 

Fixing the bid profile to be $\BC$, we now prove by contradiction. Let $u \in K_{\text{Core}}(\BC,\epsilon^*(\BC))$ with $u_{a'}<0$. In this case, we show that there exists $\epsilon <\epsilon^*(\mathcal{B})$ such that $K_\text{Core}(\mathcal{B},\epsilon)\neq \emptyset$, which would contradict the definition of the least core. Since $u \in K_\text{Core}(\mathcal{B},\epsilon^*(\mathcal{B}))$ and $V(\mathcal{B}_{a'})=0$, we have $0>u_{a'}\geq V(\mathcal{B}_{a'}) -\epsilon^*(\mathcal{B})=-\epsilon^*(\mathcal{B})$. For any $S\not\ni a'$, we have $\sum_{a\in S}u_a+u_{a'}\geq \mathcal{V}(\BC_S\cup \BC_{a'})-\epsilon^*(\mathcal{B})$ (use the fact that $\epsilon^*(\mathcal{B})\geq 0$ for the case corresponding to $S\cup a'=A$). As previously mentioned, $V$ is nondecreasing and $u_{a'}<0$. Hence, we obtain $\sum_{a\in S}u_a> V(\mathcal{B}_S)-\epsilon^*(\mathcal{B})$.

We can always find a small positive number $\delta$ such that  $\sum_{a\in S}u_a - |S|\delta> V(\mathcal{B}_S)-\epsilon^*(\mathcal{B})+\delta$ holds for any $S\not\ni a'$. Next, we show that $K_\text{Core}(\mathcal{B},\epsilon^*(\mathcal{B})-\delta)$ is nonempty. Define $\tilde{u}$ such that $\tilde{u}_a=u_a - \delta$ for all $a\neq a'$and $\tilde{u}_{a'}={u}_{a'}+(|A|-1)\delta$. Revealed utility $\tilde{u}$ clearly satisfies the equality constraint in $K_\text{Core}(\mathcal{B},\epsilon^*(\mathcal{B})-\delta)$. For inequality constraints $S\not\ni a'$, we have $\sum_{a\in S}\tilde{u}_a=\sum_{a\in S}u_a - |S|\delta> V(\mathcal{B}_S)-\epsilon^*(\mathcal{B})+\delta$, where the strict inequality follows from the definition of $\delta$. For inequality constraints $S\ni a'$, we have $\sum_{a\in S} \tilde{u}_a\geq\sum_{a\in S}u_a+\delta\geq V(\mathcal{B}_S)-\epsilon^*(\mathcal{B})+\delta$. Hence, $\tilde u \in K_\text{Core}(\mathcal{B},\epsilon^*(\mathcal{B})-\delta)$, and  $K_\text{Core}(\mathcal{B},\epsilon^*(\mathcal{B})-\delta)\neq\emptyset.$ This observation can be done under any bid profile and hence it concludes the proof. \hfill
\end{proof}

Whenever $\bar{\epsilon}=0,$ the core is nonempty, part \textit{(i)} achieves the utility bounds derived for bidding with multiple identities (shill bidding) in core-selecting auctions of private discrete items~\cite[Theorem 1]{day2008core}.\footnote{We kindly refer to~\cite{karaca2019designing,karaca2018core,karaca2020theory} for the applications of core-selecting auctions in an electricity market setting.} In contrast, our problem involves a network of continuous public choices. There are two major differences this entails in terms of proof method. First, the core in auctions of private discrete items is always nonempty, and it involves also the utility of the market organizer ignoring the budget balance property. Similar conclusions cannot be made for our problem. Our proof method utilizes instead the least core constraints without the market organizer and also an upper-bound on its relaxation term~$\epsilon$. Second, having a network of public choices requires integrating a novel definition of how areas can merge and participate as a single area into the proof method, whereas this is not needed in \citep{day2008core}. Finally, we highlight that the part \textit{(ii)} of our theorem is not implied directly by the definition of the least core (as it was the case in the core: $\bar{u}_a(\mathcal{B})\geq V(\BC_a)=0$) since individual rationality constraints are relaxed by the nonnegative term $\epsilon^*(\BC)$. 

Because we are deviating from using the DSIC Groves payment, we also need to quantify the violation of this property.

\begin{theorem}\label{thm:unil}
    Given the model \eqref{eq:market_prob}, let $p$ be any payment rule that charges at most $\bar{\epsilon}$ (with $\bar{\epsilon}\geq 0$) less than the VCG mechanism under the same bid profile. The additional utility of an area $a$ by a unilateral deviation from its true valuation, that is, $u_a(\BC_a\cup \BC_{-a})-u_a({\VC_a\cup \BC_{-a}})$ for a nontruthful bid~$\BC_a$, is at most $\bar{\epsilon}+u^{\text{VCG}}_a(\VC_a\cup \BC_{-a})-u_a({\VC_a\cup \BC_{-a}})$, where $u^{\text{VCG}}_a(\VC_a\cup \BC_{-a})=V(\VC_a\cup \BC_{-a})-V(\BC_{-a})$ is the VCG utility under the true valuation. 
\end{theorem}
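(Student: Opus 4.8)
The plan is to first strip the statement down to a single clean inequality. Notice that the true utility at the truthful point, $u_a(\VC_a\cup \BC_{-a})$, appears with a minus sign on both sides of the claimed bound, so the assertion
\[
u_a(\BC_a\cup \BC_{-a})-u_a(\VC_a\cup \BC_{-a}) \le \bar{\epsilon}+u^{\text{VCG}}_a(\VC_a\cup \BC_{-a})-u_a(\VC_a\cup \BC_{-a})
\]
is equivalent to the much simpler
\[
u_a(\BC_a\cup \BC_{-a}) \le u^{\text{VCG}}_a(\VC_a\cup \BC_{-a})+\bar{\epsilon}.
\]
Thus it suffices to bound the true utility that area $a$ obtains under the deviated bid (with the payment rule $p$) by its \emph{truthful} VCG utility plus $\bar{\epsilon}$. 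This reframing isolates the role of $\bar{\epsilon}$ and lets me bring in the two ingredients separately: the hypothesis on $p$, and the DSIC property of VCG from Proposition~\ref{prop:VCG}.

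Next I would exploit the hypothesis on $p$. Since the allocation rule $\chi^*$ is determined solely by the submitted bids and is independent of the payment rule, the allocation---and therefore the true-valuation term $v_a(\chi^*_{E_a}(\BC_a\cup\BC_{-a}))$---is identical under $p$ and under VCG for the fixed profile $\BC_a\cup\BC_{-a}$. The assumption that $p$ charges at most $\bar{\epsilon}$ less than VCG reads $p_a(\BC_a\cup\BC_{-a})\ge p^{\text{VCG}}_a(\BC_a\cup\BC_{-a})-\bar{\epsilon}$. Subtracting the payment from the common valuation term gives
\[
u_a(\BC_a\cup\BC_{-a})=v_a(\chi^*_{E_a}(\BC_a\cup\BC_{-a}))-p_a(\BC_a\cup\BC_{-a})\le u^{\text{VCG}}_a(\BC_a\cup\BC_{-a})+\bar{\epsilon},
\]
where $u^{\text{VCG}}_a(\BC_a\cup\BC_{-a})=v_a(\chi^*_{E_a}(\BC_a\cup\BC_{-a}))-p^{\text{VCG}}_a(\BC_a\cup\BC_{-a})$ is the \emph{true} utility area $a$ would earn under VCG while still submitting the deviated bid $\BC_a$.

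Finally I would invoke Proposition~\ref{prop:VCG}(i), which establishes that the Groves/VCG payment is DSIC. By definition of DSIC, bidding truthfully maximizes each area's true utility regardless of the others' bids, so $u^{\text{VCG}}_a(\BC_a\cup\BC_{-a})\le u^{\text{VCG}}_a(\VC_a\cup\BC_{-a})$ for every nontruthful $\BC_a$. Chaining this with the previous display yields the reduced inequality $u_a(\BC_a\cup\BC_{-a})\le u^{\text{VCG}}_a(\VC_a\cup\BC_{-a})+\bar{\epsilon}$, completing the argument. The one point that requires care---and the only place the proof could go astray---is conceptual rather than computational: one must keep the intermediate quantity $u^{\text{VCG}}_a(\BC_a\cup\BC_{-a})$ a \emph{true} utility evaluated at the deviated bid, and recognize that DSIC controls exactly this quantity. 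Because the paper's DSIC statement is phrased over true utilities, it applies verbatim even though the realized mechanism uses $p$ rather than the VCG payment; the fact that $v_a(\chi^*_{E_a})$ does not depend on the choice of payment rule is precisely what lets the VCG payment be substituted at the fixed profile without disturbing the valuation term.
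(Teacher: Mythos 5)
Your proof is correct and takes essentially the same route as the paper's: both arguments combine the hypothesis $p_a(\BC_a\cup\BC_{-a})\ge p_a^{\text{VCG}}(\BC_a\cup\BC_{-a})-\bar{\epsilon}$ at the deviated profile with the DSIC property of the VCG mechanism from Proposition~\ref{prop:VCG}(i), applied to the true utility evaluated at the nontruthful bid. The only difference is presentational---the paper argues by contradiction while you chain the two inequalities directly, after cleanly canceling the common term $u_a(\VC_a\cup\BC_{-a})$ from both sides.
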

\begin{proof}
    	Assume there exists a bid $\hat b_a:\hat \X_a \rightarrow \R$ such that 
	\begin{equation*}
	\begin{split}
	 &[v_a(\chi^*_{E_a}(\hat{\BC}_a\cup \BC_{-a}))-p_a(\hat{\BC}_a\cup \BC_{-a}) ]-u_a({\VC_a\cup \BC_{-a}})\\ &\hspace{4cm}> \bar{\epsilon}+V(\VC_a\cup \BC_{-a})-V(\BC_{-a})-u_a({\VC_a\cup \BC_{-a}}),
	 \end{split}
	\end{equation*}
	where $\hat{\BC}_a=\{\hat b_a\}$, $\chi^*_{E_a}(\hat{\BC}_a\cup \BC_{-a})$ is the optimal allocation of the problem corresponding to $V(\hat{\BC}_a\cup \BC_{-a})$.
	The inequality above is equivalent to the existence of a deviation that is more profitable than the given upper bound. 
	Notice that the following holds by our assumption on $p$, $$p_a(\hat{\BC}_a\cup \BC_{-a})\geq b_a(\chi^*_{E_a}(\hat{\BC}_a\cup \BC_{-a}))+V(\BC_{-a})-V(\hat{\BC}_a\cup \BC_{-a})-\bar{\epsilon}.$$
	Combining the inequalities above, we have
\begin{equation*}
\begin{split}
&v_a(\chi^*_{E_a}(\hat{\BC}_a\cup \BC_{-a}))-\big[b_a(\chi^*_{E_a}(\hat{\BC}_a\cup \BC_{-a}))+V(\BC_{-a})\\ &\hspace{5cm}-V(\hat{\BC}_a\cup \BC_{-a})\big]> V(\VC_a\cup \BC_{-a})-V(\BC_{-a}).
\end{split}
	\end{equation*}Observe that the first term is the VCG utility under a non-truthful bid, whereas the second term is the VCG utility under the true valuation. The strict inequality above contradicts the DSIC property of the VCG mechanism. We conclude that $\bar{\epsilon}+u^{\text{VCG}}_a(\VC_a\cup \BC_{-a})-u_a({\VC_a\cup \BC_{-a}})$ is indeed an upper bound on the additional profit obtained from a unilateral deviation. \hfill
\end{proof}

Notice that the least core-selecting payment rule satisfies the assumption in Theorem~\ref{thm:unil}. Lemma~\ref{lem:1} implies $p_a^{\text{LC}}(\mathcal{B})=b_a(\chi^*_{E_a}(\mathcal{B}))-\bar{u}_a(\mathcal{B})\geq b_a(\chi^*_{E_a}(\mathcal{B}))+V(\BC_{-a})-V(\BC)-\epsilon^*(\BC)=p_a^{\text{VCG}}(\mathcal{B})-\epsilon^*(\BC).$ Letting $\bar{\epsilon}$ be an upper bound on $\epsilon^*(\BC)$ for all admissible bid profiles yields the assumption. In the numerics, this parameter $\bar{\epsilon}$ will be estimated. Whenever $\bar{\epsilon}=0$, we achieve the unilateral deviation bounds originally derived for core selecting auctions of private discrete items in~\cite[Theorem 3.2]{day2007fair}. Theorem generalizes this result to the least core, and also to a network of continuous public choices. The differences of our proof method are the integration of the public choice bidding language and the market function~$V$ and also the generalization to payment mechanisms that can charge less than the VCG mechanism.

We now propose a method to pick a least core selecting payment rule to approximate the DSIC property. As we discussed, any payment rule is uniquely defined by the revealed utilities. First solve the following optimization problem to compute $\epsilon^*(\BC)$ of the least core: \begin{equation}\label{eq:ba_opt_ini}\min\{ \epsilon\,\rvert\,\epsilon\geq 0,\exists\bar{u}\in  K_{\text{Core}}( \BC,\epsilon)\}.\end{equation}
We can then solve the following to obtain the revealed utilities of our payment rule approximating DSIC: 
\begin{equation}%+ \epsilon\, ||\bar{u}-\bar{u}^{\text{c}}||_2^2
\label{eq:ba_opt_tie}
\min_{\bar{u}}\,\left\{ \max_{a\in A}\ \bar{u}_a-\bar{u}_a^{\text{VCG}}(\mathcal{B})\,\Big\rvert \  \bar{u}\in K_{\text{Core}}( \BC,\epsilon^*(\BC))\right\},
\end{equation}
where $\bar{u}_a^{\text{VCG}}(\mathcal{B})=V({\BC})-V(\BC_{-a})$ is the VCG revealed utilities.
Let $\bar{u}^{\text{MLC}}(\BC)$ denote its optimal solution. 
The min-max least core selecting (MLC) payment rule is defined by $p_a^{\text{MLC}}(\mathcal{B})=b_a(\chi^*_{E_a}(\mathcal{B}))-\bar{u}_a^{\text{MLC}}(\mathcal{B})$ for all $a\in A.$ We refer to it as the MLC mechanism.

\begin{corollary}
    The MLC mechanism is approximately DSIC in the sense that the maximum of the bounds in Theorem~\ref{thm:unil} for all areas is minimal among all the least core selecting payment rules under the condition that all the areas are truthful. 
\end{corollary}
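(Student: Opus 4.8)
The plan is to reduce the cross-rule comparison of deviation bounds to the single min-max program \eqref{eq:ba_opt_tie} that defines the MLC rule. First I would fix the truthful profile $\BC=\VC$, so that every least core selecting payment rule under comparison is evaluated at the same profile. By the definition of such a rule it is determined entirely by a feasible revealed-utility vector $\bar u(\BC)\in K_{\text{Core}}(\BC,\epsilon^*(\BC))$, via $p_a(\BC)=b_a(\chi^*_{E_a}(\BC))-\bar u_a(\BC)$; conversely every point of the least core induces one such rule. Thus the family of least core selecting rules is in bijection with the (nonempty, compact) least core polytope, and optimizing ``over all least core selecting payment rules'' is the same as optimizing over $\bar u\in K_{\text{Core}}(\BC,\epsilon^*(\BC))$.

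Next I would specialize the bound of Theorem~\ref{thm:unil} to truthfulness. Since revealed utilities coincide with true utilities when areas are truthful, $u_a(\VC)=\bar u_a(\BC)$ and $u^{\text{VCG}}_a(\VC)=\bar u^{\text{VCG}}_a(\BC)$, so the per-area bound becomes $\bar\epsilon+\bar u^{\text{VCG}}_a(\BC)-\bar u_a(\BC)$. The additive term $\bar\epsilon$ (a uniform upper bound on $\epsilon^*$) does not depend on which least core rule is chosen, so it is a common constant across the comparison. Consequently the maximum over areas of the bound equals $\bar\epsilon$ plus a functional of $\bar u$ alone, and minimizing this maximum over all least core selecting rules is exactly the min-max problem over the least core that defines the MLC revealed utilities in \eqref{eq:ba_opt_tie}. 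Because that objective is the minimization of a maximum of finitely many affine functions of $\bar u$ over the compact least core, it attains its minimum (a linear program after an epigraph reformulation), so the MLC rule realizes the minimal worst-case value, which gives the claim.

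The step I expect to be the main obstacle is verifying that the objective appearing in \eqref{eq:ba_opt_tie} coincides, up to the area-independent constant $\bar\epsilon$, with the maximum over areas of the Theorem~\ref{thm:unil} bound, i.e.\ matching the per-area deviation penalty to the min-max objective and checking the orientation of $\bar u_a-\bar u^{\text{VCG}}_a$ carefully. Everything else is bookkeeping: the bijection between rules and least core points, the reduction of true to revealed utilities under truthfulness, and the fact that $\bar\epsilon$ factors out of the $\arg\min$. Once the objective is identified with \eqref{eq:ba_opt_tie}, approximate DSIC follows immediately, since by construction no other feasible revealed-utility vector can achieve a smaller worst-case deviation bound than the optimizer $\bar u^{\text{MLC}}(\BC)$.
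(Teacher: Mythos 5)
Your overall route is exactly the paper's: the entire published proof is the single sentence that the corollary ``can easily be verified by evaluating the bound in Theorem~\ref{thm:unil} when all the remaining areas are truthful,'' and your scaffolding is the right way to flesh that sentence out --- least core points are in bijection with least core selecting rules via $p_a(\BC)=b_a(\chi^*_{E_a}(\BC))-\bar u_a(\BC)$; revealed and true utilities coincide under truthfulness; $\bar\epsilon$ is a single constant valid uniformly across all least core selecting rules, since each satisfies $p_a(\BC)\geq p_a^{\text{VCG}}(\BC)-\epsilon^*(\BC)\geq p_a^{\text{VCG}}(\BC)-\bar\epsilon$; and the min-max program attains its optimum over the compact least core polytope.

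However, the step you explicitly defer --- ``checking the orientation of $\bar u_a-\bar u^{\text{VCG}}_a$'' --- is the entire mathematical content of the claim, and carrying it out does not give the clean identification you assert. Under truthfulness the Theorem~\ref{thm:unil} bound for area $a$ is $\bar\epsilon+u^{\text{VCG}}_a(\VC)-u_a(\VC)=\bar\epsilon+\bar u^{\text{VCG}}_a-\bar u_a$, so minimizing its maximum over areas amounts to $\min_{\bar u\in K_{\text{Core}}(\BC,\epsilon^*(\BC))}\max_{a\in A}\bigl(\bar u^{\text{VCG}}_a-\bar u_a\bigr)$, equivalently a max-min in the differences $\bar u_a-\bar u^{\text{VCG}}_a$, whereas \eqref{eq:ba_opt_tie} is the min-max $\min_{\bar u}\max_{a\in A}\bigl(\bar u_a-\bar u^{\text{VCG}}_a\bigr)$ --- the opposite orientation. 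The two programs do share an optimizer when the equal-difference point $\bar u_a=\bar u^{\text{VCG}}_a+\bigl(V(\BC)-\sum_{a'\in A}\bar u^{\text{VCG}}_{a'}\bigr)/|A|$ lies in the least core, which is what happens in the paper's case study (all three differences equal $\epsilon^*(\VC)\approx 0.124$), but in general a binding least core inequality can separate them. So your proposal has a genuine gap at precisely the point you flagged as the main obstacle: you would need either to verify the orientation and reconcile it with \eqref{eq:ba_opt_tie} (the paper's one-line proof does not do this either), or to add a hypothesis, such as feasibility of the equalizing point, under which the two objectives provably select the same revealed utilities. Without one of these, ``approximate DSIC follows immediately'' does not follow from the reduction as you state it.
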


This statement can easily be verified by evaluating the bound in Theorem~\ref{thm:unil} when all the remaining areas are truthful, that is, submitting ${\VC}_{-a}.$  %We can conclude that the MLC mechanism is strong budget balanced, individually rational, approximately coalitionally stable and approximately DSIC.

As a remark, \eqref{eq:ba_opt_ini} and \eqref{eq:ba_opt_tie} can be cast as linear programs since $K_{\text{Core}}( \BC,\epsilon)$ is given by a set of linear equality and inequality constraints. However, a direct solution requires solving \eqref{eq:market_prob_coal} under all sets of coalitions to evaluate the set function $V$. Instead, these two linear programs can also be tackled efficiently using an iterative constraint generation algorithm. At every iteration, the method generates the constraint with the largest violation for a provisional solution. In practice, it converges after a few iterations. This method is well studied for the core of coalitional games, and it can always be implemented if the function~$V$ is defined by an optimization problem. A formulation of this algorithm for the least core can be found in our {previous} work~\citep{karaca2019benefits}. Finally, in this paper,~$V$ is defined by the optimization in~\eqref{eq:market_prob_coal}.
This would allow us to implement constraint generation.
\looseness=-1

\section{An illustrative case study}

We consider Figure~\ref{fig:three_area}, which comprises three areas, to compare the effectiveness of the proposed mechanisms. Following the prevailing approach, assume $\chi'=0.$ 
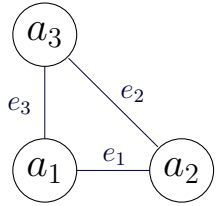
\begin{figure}[h]
	\centering
	\begin{tikzpicture}[scale=1.2, every node/.style={scale=0.9}]
    \draw[-,black!80!blue,line width=.1mm] (0.35,0) -- (1.15,0) node[above=.2cm, left=.2cm] {\large $e_1$};
    \draw[-,black!80!blue,line width=.1mm] (0,0.35) -- (0,1.15) node[above=-.4cm, left=-1.6cm] {\large $e_2$};
    \draw[-,black!80!blue,line width=.1mm] (1.24,0.26) -- (0.26,1.24) node[above=-.7cm, left=.4cm] {\large $e_3$};
    %\draw[-,black!80!blue,line width=.1mm] (1.7,1.7) -- (2,1);
    %\draw[-,black!80!blue,line width=.1mm] (1.7,1.7) -- (1,0);
    %\draw[-,black!80!blue,line width=.1mm] (0.35,1.7) -- (1,0);
    \draw (0,0) circle (.35cm) node {\LARGE $a_1$};
    \draw (1.5,0) circle (.35cm) node {\LARGE $a_2$};
    \draw (0,1.5) circle (.35cm) node {\LARGE $a_3$};
	\end{tikzpicture}
	\caption{Three-area graph}\label{fig:three_area}
\end{figure}

To verify the analysis of this paper, we will focus on the true valuations of areas given by:
\begin{equation*}
    \begin{split}
        v_{a_1}(\chi_{e_1},\chi_{e_3}) &=C_{a_1}\,[\chi_{e_1}-\chi_{e_1}^2,\,\chi_{e_3}-\chi_{e_3}^2,\,\chi_{e_1}\chi_{e_3}]^\top,\\
        \text{with}\ C_{a_1}&=[1.888,\,1.120,\,-5.533],\\
        v_{a_2}(\chi_{e_1},\chi_{e_2}) &=C_{a_2}\,[\chi_{e_1}-\chi_{e_1}^2,\,\chi_{e_2}-\chi_{e_2}^2,\,\chi_{e_1}\chi_{e_2}]^\top,\\
        \text{with}\ C_{a_2}&=[2.262,\,1.710,\,-5.012],\\
        v_{a_3}(\chi_{e_2},\chi_{e_3}) &=C_{a_3}\,[\chi_{e_2}-\chi_{e_2}^2,\,\chi_{e_3}-\chi_{e_3}^2,\,\chi_{e_2}\chi_{e_3}]^\top,\\
        \text{with}\ C_{a_3}&=[1.448,\,2.305,\,-6.580],,
    \end{split}
\end{equation*} where the permitted values of $\chi$ are chosen from $\{0,0.1,0.2,0.3,0.4\}$, and $v_a(0)=0,$ for all $a$.
In practice, each area can estimate its costs from the sequential market at each $\chi$ portion to obtain its valuation. For instance, in~\citep{kristiansen2018mechanism}, each area is assigned its producer and consumer surpluses, and the congestion rent is assumed to be divided equally between the adjacent areas; based on the zonal/nodal prices. A zonal cost estimation using such a cost allocation scheme can initially require an estimation of some market parameters. Given that these auctions are executed repeatedly, the participants can adjust their offers/bids over time via online learning algorithms, see~\citep{karaca2019no}. We expect this to ensure they are adequately remunerated. These aspects are beyond the scope of the present paper, and the bidding languages and bidding algorithms will be addressed in our future work.

Under true valuations, the optimal solution is  $[\chi_{e_1}^*,\chi_{e_2}^*,\chi_{e_3}^*]=[0.4,0,0.2]$. Payments/utilities are provided in Table~\ref{tab:payutup}. In practice, truthfulness might not hold. The VCG mechanism has a total surplus of $\bar{u}_{\text{MO}}(\mathcal{V})=\sum_{a\in A}p_a(\mathcal{V})=0.373.$ In some other cases, the VCG mechanism instead ends up with a deficit. The MLC mechanism is strongly budget balanced, that is, $\bar{u}_{\text{MO}}(\mathcal{V})=\sum_{a\in A}p_a(\mathcal{V})=0.$ The core is empty: $\epsilon^*(\mathcal{V})=\min\{\epsilon\geq 0\,|\,K_\text{Core}(\mathcal{V},\epsilon)\not=\emptyset\}=0.124.$ Under different bid profiles $\epsilon^*$ may vary. We randomize the valuations by picking $10^6$ samples from $C_{a,1}\sim\mathcal{U}([0,3])$, $C_{a,2}\sim\mathcal{U}([0,3])$, $C_{a,3}\sim\mathcal{U}([-9,0])$. Using this, we can estimate $\bar{\epsilon}=0.159$ for Theorems~\ref{thm:main1} and~\ref{thm:unil}. 

\begin{table}[h]\footnotesize 
\begin{minipage}{1\linewidth} 
\centering
\caption{Payments/utilities (\euro)}
\resizebox{.6\textwidth}{!}{\begin{tabular}{|c|c|c|c|c|c|c|}
\hline
 & $p_{a_1}$ & $u_{a_1}$ & $p_{a_2}$ & $u_{a_2}$ & $p_{a_3}$ & $u_{a_3}$\\ \hline
 VCG & $-0.154$& $0.343$ & $0.264$ & $0.279$& $0.263$ & $0.105$\\ \hline
 MLC & $-0.278$& $0.468$ & $0.139$ & $0.404$& $0.139$ & $0.230$\\ \hline
\end{tabular}}\label{tab:payutup}
\end{minipage}
\end{table}
Under the VCG, areas 1 and 2 would strongly prefer to form a coalition, since $V(\mathcal{V}_{a_1}\cup\mathcal{V}_{a_2})=0.996\geq 0.622 = {u}_{a_1}^{\text{VCG}}+{u}_{a_2}^{\text{VCG}}.$ This way, they can increase their total utility by $0.374$.
We briefly illustrate one group manipulation for areas 1 and 2.  Suppose their bids are given by 5 times their true valuations. Under the VCG, their total utility increases from $0.622$ to $1.679$. Under the MLC, their total utility increases from $0.872$ to $0.996$, which is a smaller change than the one for the VCG since Theorem~\ref{thm:unil}-\textit{(i)} limits group manipulation.

In order to enable reserve exchanges between operators, this paper proposed a market framework and derived a payment rule that attains strong  budget  balance,  individual  rationality,  and approximates coalitional stability and DSIC. These results extended studies on mechanisms that select payments from the core in auctions of private discrete items by accounting for the fact that the core can be empty for a network of continuous public choices. Our future work will study rules to distribute these payments on a market participant level. 

\bibliography{library}

\appendix

\section{(Non)emptiness of the core}\label{app:A}

The core is a closed polytope involving $2^{|A|}$ linear constraints. 
    It is nonempty if and only if the function $V$ satisfies the balancedness condition. Balanced problem settings include the cases in which $V$ is supermodular and the cases in which \eqref{eq:market_prob_coal} can be modeled by a concave exchange economy~\citep{shapley1969market}, or a linear production game~\citep{owen1975core}. In their most general form, these results involve an optimization problem maximizing a concave objective subject to linear constraints. We are solving the general non-convex optimization problem \eqref{eq:market_prob}. As a result, these previous works are not applicable to our setup. As is shown in the following proposition, nonemptiness can be guaranteed for a star graph~$(A, E)$. A similar derivation was included in our previous work for a networked coalitional game~\citep{karaca2019benefits}. We prove this result for the problem at hand for the sake of completeness.

\begin{proposition}\label{prop:appcore}

$K_{\text{Core}}(\BC,0)\neq\emptyset$ if $(A,E)$ is a star.
\end{proposition}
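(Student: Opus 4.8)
The plan is to exhibit an explicit allocation lying in $K_{\text{Core}}(\BC,0)$ rather than to invoke the balancedness characterization, since for a star the relevant coalitional values collapse to a trivial form. Let $a_0\in A$ denote the center of the star and $a_1,\dots,a_n$ its leaves, so that every link has the form $e_i=(a_0,a_i)$. I would propose the candidate revealed utilities $\bar u_{a_0}=V(\BC)$ and $\bar u_{a_i}=0$ for every leaf $a_i$, i.e.\ assign the whole optimal value to the center, and then verify each defining (in)equality of the core.

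The equality constraint is immediate: $\sum_{a\in A}\bar u_a=V(\BC)$ by construction, so strong budget balance holds. For the coalitional inequalities I would split on whether the center belongs to $S$. If $a_0\notin S$, then $S$ consists only of leaves, which are pairwise nonadjacent in a star; hence $E^S=\emptyset$, and the constraints of \eqref{eq:market_prob_coal} force $\chi_e=\chi_e'$ for every $e\in E\setminus E^S=E$. The only feasible point is therefore $\chi'$, and since $b_a(\chi'_{E_a})=0$ for all $a$ we obtain $V(\BC_S)=0$. The candidate gives $\sum_{a\in S}\bar u_a=0$, so the inequality $\sum_{a\in S}\bar u_a\ge V(\BC_S)$ holds with equality.

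If instead $a_0\in S$, the candidate yields $\sum_{a\in S}\bar u_a=\bar u_{a_0}=V(\BC)$. Because $E^A=E$, problem \eqref{eq:market_prob_coal} with $S=A$ carries no frozen-link constraints and coincides with \eqref{eq:market_prob}, so $V(\BC_A)=V(\BC)$; combined with the stated monotonicity of $V$ in $S$ this gives $V(\BC_S)\le V(\BC_A)=V(\BC)=\sum_{a\in S}\bar u_a$. Thus every coalitional inequality is satisfied, and the candidate lies in $K_{\text{Core}}(\BC,0)$, which is therefore nonempty. The singleton constraints $\bar u_a\ge V(\BC_a)=0$ are covered by these two cases as well, since a leaf gets $0$ and the center gets $V(\BC)\ge 0$ (feasibility of $\chi'$ in \eqref{eq:market_prob} already forces $V(\BC)\ge 0$).

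The only real content is the middle step: recognizing that for a coalition excluding the center the link-approval constraints in \eqref{eq:market_prob_coal} degenerate the feasible set to the single default point $\chi'$, so that such coalitions create no value. This is precisely where the star hypothesis is used---the absence of any edge between two leaves is what makes $E^S$ empty. Everything else is bookkeeping with budget balance, monotonicity, and the identity $V(\BC_A)=V(\BC)$. An alternative route through the Bondareva--Shapley balancedness condition would also work, but it is heavier, as it requires checking the inequality over all balanced families rather than producing a single witness.
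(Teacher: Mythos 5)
Your proof is correct and follows essentially the same route as the paper's: the identical witness $\bar u_{\text{center}}=V(\BC)$, $\bar u_{\text{leaf}}=0$, with the case split on whether the coalition contains the center, using $V(\BC_S)=0$ for leaf-only coalitions and monotonicity of $V$ otherwise. Your version merely spells out details the paper leaves implicit (that $E^S=\emptyset$ forces the feasible set of \eqref{eq:market_prob_coal} to the single point $\chi'$, hence $V(\BC_S)=0$, and that feasibility of $\chi'$ gives $V(\BC)\ge 0$), which is sound.
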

\begin{proof}
Let $a\in A$ be the central area. We show that the vector $\bar{u}_a=V(\BC)$, $\bar{u}_j=0$
otherwise lies in the core. Clearly the equality constraint is satisfied.
Observe that star graph implies $V(\BC_{S})=0$ for all $S\not\ni a,$ thus $\sum_{j\in S}\bar{u}_j\geq V(\BC_{S})$ for all $S\not\ni a.$ On the other hand, for all $S\ni a$, $\sum_{j\in S}\bar{u}_j=\bar{u}_a=V(\BC) \geq V(\BC_{S})$ via monotonicity of $V$. \hfill
\end{proof}

\section{Proof of Proposition~\ref{prop:VCG}}\label{app:B}

	(i) For a generic profile $\mathcal{B}$, the utility of area $a$ is:
$u_a(\mathcal{B})=\Big[\sum_{j\in A\setminus a}b_j(\chi^*_{E_j}(\mathcal{B}))+v_a(\chi^*_{E_a}(\mathcal{B}))\Big]-h_a(\mathcal{B}_{-a}),$
	where the term in brackets is the objective of the optimization problem for $V(\VC_a\cup\BC_{-a})$ evaluated at a feasible solution $\chi^*_{E_a}(\mathcal{B}).$ Hence, $u_a(\mathcal{B})\leq V(\VC_a\cup\BC_{-a})-h_a(\mathcal{B}_{-a})$.
    Notice that the term on the right is $u_a(\VC_a\cup\BC_{-a}).$ Therefore, the utility under bidding truthfully weakly dominates the utility under any other bid, regardless of other areas $\mathcal{B}_{-a}$. 
    
    (ii) We have $\bar{u}_a(\mathcal{B})=V(\mathcal{B})-V(\mathcal{B}_{-a}) \geq 0$ by monotonicity of $V$.\hfill$\square$

	\section{The limitations of the Groves payment}\label{app:B2}

    Suppose we have the setting of two areas $a_1$ and $a_2$ connected with a single link. Define $\SB=\hat{\X}_{a_1}=\hat{\X}_{a_2}=\{0,\,1\}$, and $\chi'=0$. Area $a_1$ has two strategies $b_{a_1}^1(1)=1$ and $b_{a_1}^2(1)=0$, area $a_2$ has two strategies $b_{a_2}^1(1)=-1$ and $b_{a_2}^2(1)=0$. Note that these functions are required to be zero at the default transmission capacity allocation $\chi'=0$. Strong budget balance implies the following equalities: $t_{a_1}(\{b_{a_1}^i,b_{a_2}^j\})+t_{a_2}(\{b_{a_1}^i,b_{a_2}^j\})=0,$ for $i,j=1,2.$ Invoking the definition of the Groves payment and \eqref{eq:market_prob}, these equalities can be rewritten as
    \begin{align*}
        &h_{a_1}(\{b_{a_2}^2\})+h_{a_2}(\{b_{a_1}^1\})=-b_{a_1}(\chi^*_{E_{a_1}}(\{b_{a_1}^1,b_{a_2}^2\}))\\&+V(\{b_{a_1}^1,b_{a_2}^2\})
        -b_{a_2}(\chi^*_{E_{a_2}}(\{b_{a_1}^1,b_{a_2}^2\}))+V(\{b_{a_1}^1,b_{a_2}^2\})\\
        &\hspace{3.67cm}=-1+1-0+1=1,\\ &h_{a_1}(\{b_{a_2}^j\})+h_{a_2}(\{b_{a_1}^i\})=0,\ (i,j)\in\{(1,1),(2,1),(2,2)\}.
    \end{align*}
    Above can be written as $A\bm {h}=b=[1,\, 0,\, 0,\, 0]^\top$, where $\bm {h}\in\R^4$ concatenates $h_a$'s. $\text{rank}(A)=3$, moreover $b\notin\text{colspan}(A)$. Hence, there are no functions $h_{a_1}$ and $h_{a_2}$ such that strong budget balance is achieved, and the Groves payment can also not attain $\epsilon$-coalitional stability for any $\epsilon$ (see also the discussions in~\cite{karaca2018exploiting} for the case with a nonempty core).
\end{document}